\DeclareOldFontCommand{\rm}{\normalfont\rmfamily}{\mathrm}
\definecolor{darkgreen}{rgb}{0.31, 0.47, 0.26}
\theoremstyle{plain}
\newtheorem{theorem}{Theorem}[section]
\newtheorem{proposition}[theorem]{Proposition}
\theoremstyle{definition}
\newtheorem{remark}[theorem]{Remark}
\theoremstyle{plain}
\newtheorem{result}{Result} % Theorem-like function for key experimental results
\pgfplotsset{compat=1.16}
\pgfplotsset{grid = major, grid style={gray!30!white}}
\patchcmd{\@settitle}{\uppercasenonmath\@title}{\scshape\large}{}{}
\patchcmd{\@setauthors}{\MakeUppercase}{\scshape\normalsize}{}{}
\pretocmd{\NAT@citex}{%
  \let\NAT@hyper@\NAT@hyper@citex
  \def\NAT@postnote{#2}%
  \setcounter{NAT@total@cites}{0}%
  \setcounter{NAT@count@cites}{0}%
    \forcsvlist{\stepcounter{NAT@total@cites}\@gobble}{#3}}{}{}
\newcounter{NAT@total@cites}
\newcounter{NAT@count@cites}
\def\NAT@postnote{}
\def\NAT@hyper@citex#1{%
  \stepcounter{NAT@count@cites}%
  \hyper@natlinkstart{\@citeb\@extra@b@citeb}#1%
  \ifnumequal{\value{NAT@count@cites}}{\value{NAT@total@cites}}
    {\ifNAT@swa\else\if*\NAT@postnote*\else%
     \NAT@cmt\NAT@postnote\global\def\NAT@postnote{}\fi\fi}{}%
  \ifNAT@swa\else\if\relax\NAT@date\relax
  \else\NAT@@close\global\let\NAT@nm\@empty\fi\fi% avoid compact citations
  \hyper@natlinkend}
\renewcommand\hyper@natlinkbreak[2]{#1}
\title[The Differentiable Feasibility Pump]{The Differentiable Feasibility Pump}
\author[M. Cacciola, A. Forel]{Matteo Cacciola, Alexandre Forel}
\address[M. Cacciola, A. Forel]{Polytechnique Montréal, Montréal, QC, Canada}
\email{mvazcacciola@gmail.com, alexandre.forel@polymtl.ca}
\author[A. Frangioni]{Antonio Frangioni}
\address[A. Frangioni]{Dipartimento di Informatica, Universita di Pisa, Pisa, Italy}
\email{frangio@di.unipi.it}
\author[A. Lodi]{Andrea Lodi}
\address[A. Lodi]{Jacobs Technion-Cornell Institute, Cornell Tech and Technion–IIT, New York, NY, USA}
\email{andrea.lodi@cornell.edu}
\date{\today}
\newcommand{\cross}{\textcolor{gray}{\ding{55}}}
\newcommand{\tick}{\ding{52}}
\newcommand{\argmin}{\operatornamewithlimits{argmin}}
\newcommand{\round}[1]{\ensuremath{\left\lfloor#1\right\rceil}}
\DeclareMathOperator{\Exp}{\mathbb{E}}
\DeclareMathOperator{\Prob}{\mathbb{P}}
\DeclareMathOperator{\indicator}{\mathbb{I}}
\DeclareMathOperator*{\relu}{ReLU}
\def\nMax{N_{\text{max}}}
\def\reals{\mathbb{R}}
\DeclareMathOperator*{\loss}{\mathcal{L}}
\newcommand{\fcarg}[1]{\def\fc@rg{#1}\ifx\fc@rg\empty\fcdot\else\fc@rg\fi}
\begin{document}

\begin{abstract}
    Although nearly 20 years have passed since its conception, the feasibility pump algorithm remains a widely used heuristic to find feasible primal solutions to mixed-integer linear problems. Many extensions of the initial algorithm have been proposed. Yet, its core algorithm remains centered around two key steps: solving the linear relaxation of the original problem to obtain a solution that respects the constraints, and rounding it to obtain an integer solution. This paper shows that the traditional feasibility pump and many of its follow-ups can be seen as gradient-descent algorithms with specific parameters. A central aspect of this reinterpretation is observing that the traditional algorithm differentiates the solution of the linear relaxation with respect to its cost. This reinterpretation opens many opportunities for improving the performance of the original algorithm. We study how to modify the gradient-update step as well as extending its loss function. We perform extensive experiments on MIPLIB instances and show that these modifications can substantially reduce the number of iterations needed to find a solution.
\end{abstract}

\maketitle

\section{Introduction}
\label{sec:intro}

Despite significant improvements in open-source and commercial solvers in recent years, solving large-scale mixed-integer linear optimization problems remains a computational challenge. Primal heuristics, which return a solution that satisfies all constraints of the problem, are a cornerstone of the overall solving process. Among them, the feasibility pump, originally proposed by \citet{Fischetti2005feasibility}, remains a popular and efficient method.

The feasibility pump is based on two main operations: (i)~solving the linear relaxation of the original problem to obtain a solution that is not necessarily integer but respects the linear constraints, and (ii)~rounding the solution to respect the integrality constraints, at the risk of violating the linear constraints. The algorithm iterates over these two steps, changing the objective of the problem at each iteration. When it gets stuck in a cycle, restart operations are performed to perturb the objective and resume the algorithm.

In this paper, we propose a novel interpretation of the feasibility pump. We show that the traditional algorithm can be seen as a gradient-descent with a specific loss function, gradient estimation, and gradient update steps. This perspective builds upon the recent works on differentiating through optimization models and integrating them as layers into deep learning \citep{Amos2019differentiable, Amos2017optnet}. Differentiable optimization has seen extensive applications in structured learning and contextual optimization \citep{Kotary2021end, Mandi2024survey, Sadana2023survey}.

This reinterpretation allows us to derive a new generalized feasibility pump. Because of the central role of differentiable optimization, we call this generalization the \textit{differentiable feasibility pump}. It extends the original algorithm in multiple ways. First, it allows simple modifications of the gradient-update parameters as well as applying more advanced gradient-descent algorithms. Second, it allows adapting the loss function to take advantage of the gradient-descent setting. Specifically, we show that a piecewise convex loss function leads to better performance than the original one, which is piecewise linear. Third, it allows extending the loss function by adding differentiable loss terms. In this paper, we introduce a loss term to penalize the infeasibility of the rounded solution.

\subsection{Related works.} Several extensions have been proposed since the work of \citet{Fischetti2005feasibility}. While the original algorithm was proposed for binary mixed-integer linear problems, it was quickly extended to general linear mixed-integer problems \citep{Bertacco2007feasibility} and non-linear ones \citep[see, e.g.,][]{Bonami2009feasibility, DAmbrosio2012storm, Misener2012global, Bernal2020improving}. We present below several extensions and refer an interested reader to \citet{Berthold2019ten} and \citet{Berthold2025heuristicbook} for an extensive overview.

\textbf{Generalizations.} \citet{Boland2012new} interpret the feasibility pump as a proximal point algorithm. \citet{DeSantis2014feasibility} and \citet{DeSantis2013new} cast feasibility pumps as a Frank-Wolfe method and introduce concave penalty functions for measuring solution integrality. \citet{Geissler2017penalty} frame the feasibility pump as an alternating direction method applied to a specific reformulation of the original problem. They then study modifications of the restarts to escape cycles. \citet{Dey2018improving} focus on the restart components and develop extensions based on the WalkSAT local-search algorithm.

\textbf{Objective-informed.} A limitation of the original algorithm is that it has no explicit component to emphasize the quality of the solution in terms of cost. Ideally, a primal heuristic would return a feasible solution that also has a small cost for minimization problems. \citet{Achterberg2007improving} introduce an objective component to guide the pump towards feasible solutions with a low cost. Recently, \citet{Mexi2023using} introduce objective scaling terms.

\textbf{Constraint-informed.} A second often-discussed limitation is that the two key steps of the feasibility pump are performed ``blind'' to each other. That is, the integrality constraints are ignored when solving the linear relaxation, and the linear constraints are ignored when rounding the relaxed solution. Several approaches have been proposed to remedy this issue, mostly focusing on making the rounding component aware of the problem's constraints. \citet{Fischetti2009feasibility} investigate a modified rounding operation to encourage constraint propagation. \citet{Baena2011using} modify the rounding step to round on the segment between the computed feasible point and the analytic center of the relaxed problem. Other modifications include introducing an integer line search \citep{Boland2014boosting}, withs a barrier term to take into account active constraints \citep{Frangioni2021constraints}, or multiple reference vectors \citep{Mexi2023using}.

\subsection{Contributions.} In this paper, we make the following contributions:
\begin{enumerate}[label=(\roman*)]
    \item We reinterpret the feasibility pump algorithm as a gradient-descent with specific loss function, gradient estimation, and gradient-update parameters.
    \item We present the differentiable feasibility pump, a generalized feasibility pump algorithm based on gradient descent, and show how this general form encompasses many existing algorithms.
    \item We investigate several variations of the differentiable feasibility pump. In particular, we introduce a differentiable rounding operation that allows us to include a loss term to measure the infeasibility of the rounded solution. This provides a new solution to the ``blindness'' of the feasibility pump.
    \item We show the value of the differentiable pump in extensive experiments on MIPLIB instances. Our experiments show that the original feasibility pump is remarkably stable with respect to its hyperparameters. However, introducing a feasibility loss together with changes in the hyper-parameters can significantly improve performance. Another interesting result is that a differentiable pump using only feasibility loss requires one order of magnitude fewer restart operations, leading to a significantly less noisy algorithm.
\end{enumerate}

\subsection{Outline.}
\cref{sec:background} recalls the original feasibility pump and introduces background literature on differentiable optimization. \cref{sec:generalized} presents our reinterpretation of the feasibility pump as a gradient-descent algorithm and shows the equivalence with the original algorithm under a specific set of parameters. \cref{sec:extension} presents the differentiable feasibility pump and introduces a loss measuring the infeasibility of the rounded solution. \cref{sec:numerical} investigates experimentally the performance of several of these extensions. Finally, \cref{sec:conclusion} summarizes our findings and highlights directions for future research.
\section{Background}\label{sec:background}

We consider binary mixed-integer linear optimization problems of the form
\begin{subequations}
	\label{opt:binlp}
	\begin{alignat}{2}
		\min        & \quad && c^\top x, \label{opt:binlp:obj}\\
		\text{s.t.} &       && Ax \ge b,  \label{opt:binlp:const}\\
		            &       && x \in \left\lbrace 0, 1 \right\rbrace^n, \label{opt:binlp:domain}
	\end{alignat}
\end{subequations}
where $A \in \reals^{m \times n}$, $b \in \reals^{m}$, and $c \in \reals^n$ is the cost vector. We denote the feasible region of Problem~\eqref{opt:binlp} as $\mathcal{X} = \left\{ x~\in~\{0,1\}^n ~|~ \allowbreak Ax~\ge~b \right\}$ and the feasible region of the relaxed problem as $\hat{\mathcal{X}} = \{ x~\in~[0,1]^n \allowbreak ~|~ Ax~\ge~b \}$. We say that an element of $\hat{\mathcal{X}}$ is feasible and an element of $\{0,1\}^n$ is integer. A solution of Problem~\eqref{opt:binlp} is both feasible and integer. While continuous variables can be easily included in Problem~\eqref{opt:binlp}, we do not consider them to simplify our presentation.

\subsection{Feasibility pump.}
\label{sec:trad_pump}

The main goal of the feasibility pump is to find a feasible solution of Problem~\eqref{opt:binlp}. To achieve this, it alternates between solving the linear relaxation of the problem for a given objective, and rounding its solution until a solution that is both feasible and integer is found. This approach is motivated by the fact that the extreme points of $\mathcal{X}$ are also extreme points of $\hat{\mathcal{X}}$ when Problem~\eqref{opt:binlp} has only binary variables. Therefore, there always exists a cost vector $\bar{\theta}$ such that $\bar{x} \in \argmin\left\{\bar{\theta}\,^\top x ~|~ x\in \hat{\mathcal{X}}\right\}$ is a feasible integer solution, provided that $\mathcal{X} \neq \varnothing$.

\subsubsection{Algorithm.}

The feasibility pump of \citet{Fischetti2005feasibility} is summarized in Algorithm~\ref{alg:basic_pump}, where we denote by $x^{(k)}$ the solution obtained at the $k$-th iteration, and by $x_i$ the $i$-th component of solution~$x$. The algorithm starts by solving the linear relaxation of Problem~\eqref{opt:binlp} to obtain the feasible solution $\hat{x}^{(0)}$. If $\hat{x}^{(0)}$ is integer, it belongs to~$\mathcal{X}$ and the algorithm terminates. Otherwise, the integer solution $\round{x}^{(0)}$ is computed by rounding $x^*$ component-wise to the closest integer. If $\round{x}^{(0)}$ is feasible then it belongs to $\mathcal{X}$ and, again, the algorithm terminates. In any iteration, if the rounded solution $\round{x}^{(k)}$ is not feasible, the algorithm projects it onto the feasible domain $\hat{\mathcal{X}}$ and finds the feasible point $\hat{x}^{(k+1)}$. If $\hat{x}^{(k+1)}$ is integral the algorithm terminates, otherwise $\round{x}^{(k+1)}$ is computed as the rounding of $\hat{x}^{(k+1)}$. If $\round{x}^{(k)}$ is feasible the algorithm terminates, otherwise another iteration is performed. In a nutshell, the algorithm alternatively projects solutions onto the feasible and integer regions.

\begin{algorithm}
    \caption{Original Feasibility Pump.}
    \label{alg:basic_pump}
    \begin{algorithmic}
        \State \textbf{Given:} Problem~\eqref{opt:binlp}, distance function $\Delta$, maximum number of iterations $\nMax$
        \State \textbf{Initialize:} $\hat{x}^{(0)} \in \argmin_x\{ c^\top x:\;x\in \tilde{\mathcal{X}}\}$ and $k=0$
        \If{$\hat{x}^{(0)}$ is integer}
            \State \textbf{Return} $\hat{x}^{(0)}$
        \EndIf
        \State Round $\hat{x}^{(0)}$ to obtain an integral solution $\round{\hat{x}}^{(0)}$
        \If{$\round{\hat{x}}^{(0)}$ is feasible}
            \State \textbf{Return} $\round{\hat{x}}^{(0)}$
        \EndIf
        \While {$k<\nMax$}
            \State Increase iteration counter: $k = k+1$
            \State Compute the feasible solution $\hat{x}^{(k)} =\argmin_x\{ \Delta(x, \round{\hat{x}}^{(k-1)}):\; x\in \hat{\mathcal{X}}\} $
            \If{$\hat{x}^{(k)}$ is integer}
                \State \textbf{Return} $\hat{x}^{(k)}$
            \EndIf
            \State Round $\hat{x}^{(k)}$ to obtain $\round{\hat{x}}^{(k)}$
            \If{$\round{\hat{x}}^{(k)}$ is feasible}
                \State \textbf{Return} $\round{\hat{x}}^{(k)}$
            \EndIf
        \EndWhile
    \end{algorithmic}
\end{algorithm}

\subsubsection{Distance function.} Projecting the integer solution onto the feasible domain requires a distance measure. The feasibility pump uses the $\ell_1$ norm, also called Hamming distance. It is defined as $\Delta(x, \round{x}) = \sum_i |x_i-\round{x}_i|$. Given a fixed vector $\round{x}$, $\Delta(x, \round{x})$ is an affine function in $x$ and, since $\round{x}$ is a binary vector, it can be computed as
\begin{equation}
    \label{eq:hamming}
    \Delta(x, \round{x}) = \sum_{i:\round{x}_i=1} (1-x_i)+\sum_{i:\round{x}_i=0} x_i = \sum_{i=1}^n \round{x}_i - \sum_{i:\round{x}_i=1} x_i +\sum_{i:\round{x}_i=0} x_i.
\end{equation}
Hence, in iteration $k$, the feasible solution that is closest to $\round{x}^{(k-1)}$ is found by solving the linear program given by $\min\{\Delta(x, \round{x}^{(k-1)}) ~|~ x\in \hat{\mathcal{X}}\}$ or, equivalently, by solving
\begin{equation}
    \label{opt:theta_LP}
    \min\{\theta^\top x ~|~ x\in \hat{\mathcal{X}}\},
\end{equation}
where $\theta \in \reals^n$ is such that $\theta_i = -1$ if $\round{x}^{(k-1)}_i = 1$, and $\theta_i = 1$ otherwise. Equation~\eqref{opt:theta_LP} shows that the feasibility pump solves a sequence of linear problems that differ only in their cost parameters.

\subsubsection{Restarts.} A limitation of the feasibility pump is that it can get stuck in cycles when the objective given by Equation~\eqref{eq:hamming} has already been obtained in the past. Several strategies have been introduced to escape these cycles. The first strategy of \citet{Fischetti2005feasibility} is to flip a random number of entries of the rounded solution $\round{x}$ at the end of an iteration when a cycle of size one is detected. The second is to apply random perturbations to $\round{x}$ when a cycle of size larger than two is detected. These restart strategies, called \textit{flip} and \textit{perturbation} respectively, are not the focus of this paper. To simplify our presentation, we omitted them from Algorithm~\ref{alg:basic_pump} and assume thereafter that our feasibility pump algorithms implement them exactly as the original one.

\subsection{Differentiable optimization.}
\label{sec:diff_opt}

This work is motivated by the novel perspective brought by recent works on differentiable optimization. This stream studies how to integrate mixed-integer linear optimization models as differential layers in structured learning \citep{Amos2017optnet, Blondel2020learning, Blondel2024elements} and contextual stochastic optimization \citep{Elmachtoub2022, Sadana2023survey}. This allows training a machine-learning pipeline to predict a set of parameters that respect the constraints of a mixed-integer linear problem. Further, the machine-learning model can be trained in an end-to-end fashion: minimizing the cost of a downstream optimization task \citep[see, e.g.,][]{Donti2017, Elmachtoub2022, Wilder2019, Ferber2020}. To achieve this, a meaningful gradient must be propagated from the loss back to the machine-learning model. This can be done in several ways. Early works focus on quadratic programs and use an implicit differentiation approach based on the Karush–Kuhn–Tucker conditions of optimality \citep{Amos2017optnet, Amos2019differentiable}.

While several types of optimization models have been considered, the most studied form is the linear Problem~\eqref{opt:binlp} parameterized by its cost coefficients. Let $x^*(\theta)$ be a solution of the combinatorial linear problem $\min\{\theta^\top x ~|~ x\in \mathcal{X}\}$ parameterized by $\theta$. A key challenge is that the gradient of the $\argmin$ of a (combinatorial) linear program with respect to its cost vector is non-informative. Formally, the Jacobian $J_{\theta} x^*(\theta)$ is equal to~$0$ almost everywhere since the solution $x^*(\theta)$ is constant for small variations of~$\theta$, and undefined as the solution jumps from one vertex to another for a large enough change. Differentiable optimization studies how to obtain surrogate gradients.

A common technique is to consider a surrogate optimization model and using its gradient in place of the ones of the original model. The perturbation method of \citet{Berthet2020} regularizes the optimization model by perturbing its cost vector \citep{Dalle2022}. With an additive perturbation, the solution of the regularized problem is given by
\begin{equation*}
    x_\varepsilon^*(\theta) = \Exp_Z \left[ \argmin_{x\in \mathcal{X}} (\theta + \varepsilon Z)^\top x \right],
\end{equation*}
where $\varepsilon>0$ is a scaling parameter and $Z$ follows an exponential distribution with positive differentiable density. The Jacobian of $x_\varepsilon^*(\theta)$ is then given by
\begin{equation}
    \label{eq:pert_jacobian}
    J_\theta x_\varepsilon^*(\theta) = 1/\varepsilon \Exp_Z \left[ x_\varepsilon^*(\theta + Z) Z^\top \right].
\end{equation}
This gradient can be evaluated using a Monte Carlo approximation, by sampling perturbation vectors and solving the corresponding optimization problems. Other methods to obtain a differentiable surrogate optimization model include adding a quadratic regularization term \citep{Wilder2019}, using a linear interpolation between solutions \citep{vlastelica2019diff}, or linking gradient computations with interior point methods \citep{Mandi2020}.

A limitation of the perturbed optimizers is that they need to solve several problems for each gradient computation, which can be computationally expensive. An efficient alternative is to take minus the identity matrix as surrogate Jacobian \citep{Sahoo2023backpropagation, Fung2022jfb}, i.e.,
\begin{equation}
    \label{eq:minut_id}
    J_\theta x^*(\theta)= - I_n,
\end{equation}
with $I_n$ the identity matrix of size $n$. The core idea of this approach follows from the observation that when the $i$-th coefficient of the cost vector increases, it is likely that the $i$-th component of the optimal solution decreases.
\section{Differentiable Feasibility Pump}
\label{sec:generalized}

Differentiable optimization underlies our reinterpretation of the feasibility pump algorithm as a gradient-descent algorithm. Further, it is based on two key observations: (i)~the feasibility pump solves a sequence of problems that differ only in their cost parameters, and (ii)~the feasibility pump focuses precisely on the class of problems studied in differentiable optimization.

\subsection{Regularized gradient-descent interpretation.} 
We now present the differentiable feasibility pump, an interpretation of the feasibility pump as a regularized gradient-descent algorithm. This algorithm minimizes a loss function $\loss : \reals^n \rightarrow \reals$ over the cost vector $\theta \in \reals^n$. First, we present the differentiable feasibility pump algorithm shown in Algorithm~\ref{alg:gen_fp}, where $\eta > 0$ is the step size. Second, we show how a restricted version of this generalized pump recovers the original feasibility pump.
\begin{algorithm}
    \caption{Differentiable feasibility pump}
    \label{alg:gen_fp}
    \begin{algorithmic}
        \State \textbf{Initialize:} $\theta^{(0)} = c$, $k=0$
        \While {$k<\nMax$}
            \State Compute the feasible solution $\hat{x}^{(k)} = \argmin\{{\theta^{(k)}}^\top x: x\in \tilde{\mathcal{X}}\}$
            \State Round it to obtain the integer solution $\round{\hat{x}}^{(k)}$
            \If{$\round{\hat{x}}^{(k)}$ is feasible}
                \State \textbf{Return} $\round{\hat{x}}^{(k)}$
            \Else{}
                \State Update the cost vector $\theta^{(k+1)} = \theta^{(k)} - \eta \nabla_\theta \loss(\theta)$
            \EndIf
        \EndWhile
    \end{algorithmic}
\end{algorithm}

\begin{remark}
    Algorithm~\ref{alg:basic_pump} considers two success conditions: when the feasible solution is integral and when the rounded solution is feasible. However, they are redundant since the second criterion is always satisfied when the first one is. This is why Algorithm~\ref{alg:gen_fp} uses a single termination criterion: $\round{x}$ is feasible.
\end{remark}
\subsubsection{Integrality loss.}
\label{sec:integrality_loss}
The first success condition of the original feasibility pump is finding a cost vector $\bar{\theta}$ such that $\hat{x}(\bar{\theta}) = \argmin_x\{\bar{\theta}^\top x: x\in \hat{\mathcal{X}}\}$ is integer. We introduce the loss function $f : \reals^n \rightarrow \reals$ to measure the non-integrality of a feasible solution $\hat{x}$. In this work, we consider the class of integrality losses
\begin{equation}
    \label{eq:integrality_loss}
    f(x) = \sum_{i=1}^n \left( \min \left\lbrace x_i,1-x_i \right\rbrace \right)^p,
\end{equation}
where $p > 0$ is the loss order. As also studied by \citet{DeSantis2014feasibility}, a multitude of choices are possible such as $f(x) = \sum_{i=1}^n x_i(1-x_i)$.

Based on the value of $p$, the loss~$f$ has local convexity/concavity properties. Formally, consider $\mathcal{D} = \left\{\prod_{j=1}^n D_j ~|~ D_j \in \{[0, 0.5), [0.5, 1]\} \right\}$, the partition of the $n$-dimensional hypercube . The restriction of $f(x)$ to any element of $\mathcal{D}$ is concave if $p \leq 1$ and convex if $p \geq 1$. This is illustrated in Figure~\ref{fig:integ_loss} for a single decision variable.
\begin{figure}[ht!]
    \centering
    \resizebox{0.95\linewidth}{!}{\begin{tikzpicture}
\begin{groupplot}[
    group style={
        group name=my plots,
        group size=3 by 1,
        xlabels at=edge bottom,
        ylabels at=edge left},
    height = 4.5cm,
    width  = 5cm,
    enlarge x limits = 0,
    xlabel = {$x$},
    ylabel = {Integrality loss $f(x)$},
    xmin = 0,
    xmax = 1,
    ymin=0
    ]

    \nextgroupplot[title = {(a) $p=0.5$}, font = \small]
    \addplot+[mark=none, line width=0.3mm] table [x index = {0}, y index = {1}, col sep=comma]{plots/csv/integrality_loss.csv};

    \nextgroupplot[title = {(b) $p=1$}, font = \small]
    \addplot+[mark=none, line width=0.3mm] table [x index = {0}, y index = {2}, col sep=comma]{plots/csv/integrality_loss.csv};

    \nextgroupplot[title = {(c) $p=2$}, font = \small]
    \addplot+[mark=none, line width=0.3mm] table [x index = {0}, y index = {3}, col sep=comma]{plots/csv/integrality_loss.csv};

\end{groupplot}
\end{tikzpicture}}
    \caption{Integrality loss as a function of $x$ for several loss orders: (a)~$p=0.5$, (b)~$p=1$, and (c)~$p=2$.}
    \label{fig:integ_loss}
\end{figure}
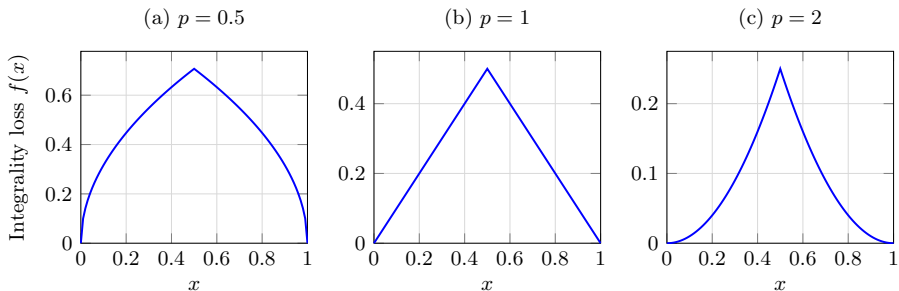

\subsubsection{Regularized loss.}
\label{sec:reg_loss}
Consider now the regularized loss function given by
\begin{equation}
    \label{eq:basic_loss}
    \loss(\theta) = f(\hat{x}(\theta)) + \gamma \frac{\|\theta\|_2^2}{2},
\end{equation}
where $\gamma>0$ is the regularization weight. The second component of this loss adds a regularization term that controls the magnitude of the cost vector $\theta$. Minimizing over this regularized loss is equivalent to minimizing the integrality loss $f$ with a constraint on the norm of $\theta$. Notice that the norm of $\theta$ does not influence the solution of the linear relaxation. That is $\hat{x}(\theta) = \hat{x}(\theta / r)$ for any scalar $r > 0$, with a slight abuse of notation in case the $\argmin$ is set-valued.

\subsubsection{Gradient update.} Using this regularized loss, the differentiable feasibility pump minimizes the optimization problem given by
\begin{subequations}
\label{opt:min_reg_loss}
\begin{alignat}{2}
\min_{\theta}    & \quad && \loss(\theta), \\
\text{s.t.} &       && \theta \in \reals^n.
\end{alignat}
\end{subequations}
Solving Problem~\eqref{opt:min_reg_loss} with a gradient-descent algorithm requires computing the gradient of the loss $\loss$ with respect to $\theta$, that is
\begin{equation}
    \nabla_{\theta} \loss(\theta) = \nabla_{\theta} \left(f(\hat{x}(\theta)) + \gamma \frac{\|\theta\|_2^2}{2}\right),
\end{equation}
and updating the cost vector $\theta$ as
\begin{equation}
    \label{eq:grad_update}
    \theta \Longleftarrow \theta - \eta \nabla_{\theta} \loss(\theta).
\end{equation}
Using the chain rule, the gradient of the loss can be computed as
\begin{equation}
    \label{eq:chain_rule}
    \nabla_{\theta} \loss(\theta) = (J_{\theta} \hat{x}(\theta))^\top \nabla_{\hat{x}(\theta)}f(\hat{x}(\theta)) + \gamma \theta.
\end{equation}
As discussed in Section~\ref{sec:diff_opt}, several techniques can be applied to obtain a surrogate Jacobian $J_\theta \hat{x}(\theta)$.

\subsection{Equivalence with the original algorithm.}
We now show that the original feasibility pump is a special case of the differentiable pump with specific parameters, loss function, and gradient estimation.
\begin{proposition}
    \label{prop:equivalence}
    When $f(x) = \sum_{i=1}^n \min\{x_i, 1-x_i\}$, $\eta = 1$, $\gamma = 1$, and the Jacobian of the $\argmin$ is computed as $J_{\theta} \hat{x}(\theta) = - I_n $, Algorithm~\ref{alg:gen_fp} generate the same sequence of solutions $\{ \hat{x}^{(k)} \}$ as the original feasibility pump algorithm.
\end{proposition}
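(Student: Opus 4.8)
The plan is to show that, under the stated choices, the gradient update of Algorithm~\ref{alg:gen_fp} collapses to the rule $\theta^{(k+1)}_i = -1$ if $\round{\hat{x}}^{(k)}_i = 1$ and $\theta^{(k+1)}_i = +1$ otherwise, which is exactly the cost vector minimized at each iteration of the original pump in Equation~\eqref{opt:theta_LP}; the equivalence of the full sequence $\{\hat{x}^{(k)}\}$ then follows by induction on $k$.

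First I would compute the componentwise gradient of the integrality loss $f(x) = \sum_{i=1}^n \min\{x_i, 1-x_i\}$ with respect to $\hat{x}$. For $\hat{x}_i \neq 1/2$, the scalar map $x_i \mapsto \min\{x_i, 1-x_i\}$ has derivative $+1$ when $\hat{x}_i < 1/2$ and $-1$ when $\hat{x}_i > 1/2$. The decisive observation is that rounding to the nearest integer sets $\round{\hat{x}}_i = 0$ precisely when $\hat{x}_i < 1/2$ and $\round{\hat{x}}_i = 1$ when $\hat{x}_i > 1/2$, so the sign pattern of $\nabla_{\hat{x}} f(\hat{x})$ matches the feasibility-pump cost exactly: $(\nabla_{\hat{x}} f(\hat{x}))_i = +1$ iff $\round{\hat{x}}_i = 0$ and $-1$ iff $\round{\hat{x}}_i = 1$. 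Substituting this into the chain rule of Equation~\eqref{eq:chain_rule} with $J_\theta \hat{x}(\theta) = -I_n$ and $\gamma = 1$ gives $\nabla_\theta \loss(\theta) = -\nabla_{\hat{x}} f(\hat{x}(\theta)) + \theta$, and inserting this into the update~\eqref{eq:grad_update} with $\eta = 1$ makes the current iterate $\theta^{(k)}$ cancel algebraically, leaving $\theta^{(k+1)} = \nabla_{\hat{x}} f(\hat{x}^{(k)})$. By the sign identity, this is exactly the pump's cost vector for iteration $k+1$. To anchor the induction, note that $\theta^{(0)} = c$ forces $\hat{x}^{(0)}$ to coincide in both algorithms; the inductive step then propagates equality of $\hat{x}^{(k)}$ --- and hence of $\round{\hat{x}}^{(k)}$ and of the next cost vector --- from $k$ to $k+1$.

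I expect the main obstacle to be the non-differentiability of $f$ on the set where some $\hat{x}_i = 1/2$: there the componentwise derivative is only a subgradient taking any value in $[-1, +1]$, and the rounding operation is itself ambiguous. I would dispose of this by fixing a single tie-breaking convention shared by both algorithms --- say $\round{1/2} = 1$ together with the subgradient value $-1$ --- so that the sign identity continues to hold verbatim. A secondary point worth making explicit is why the regularization is benign: the term $\gamma\theta$ with $\gamma = \eta = 1$ is precisely what cancels $\theta^{(k)}$, so that $\theta^{(k+1)}$ is memoryless and resets to the pure $\pm 1$ vector at each iteration rather than accumulating; and since $\hat{x}(\theta) = \hat{x}(\theta/r)$ for any $r > 0$ (noted in Section~\ref{sec:reg_loss}), any residual scaling would in any case leave the recovered relaxation solution unchanged.
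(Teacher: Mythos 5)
Your proposal is correct and follows essentially the same route as the paper's proof: apply the chain rule with $J_\theta \hat{x}(\theta) = -I_n$, use $\eta\gamma = 1$ so that the regularization term cancels the current iterate and the update becomes $\theta^{(k+1)} = \nabla_{\hat{x}} f(\hat{x}^{(k)})$, and then match the $\pm 1$ sign pattern of this gradient to the cost vector of Equation~\eqref{opt:theta_LP}. Your explicit induction anchored at $\theta^{(0)} = c$ and your handling of the tie at $\hat{x}_i = 1/2$ are slightly more careful than the paper's write-up, but they do not change the argument.
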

\begin{proof}
    Consider the regularized loss given in Equation~\eqref{eq:basic_loss} with integrality loss as in Equation~\eqref{eq:integrality_loss} with $p=1$. The gradient update is now given by
    \begin{equation*}
        \theta \Longleftarrow \theta - \eta \nabla_\theta \left( f(\hat{x}(\theta)) \right) - \eta\gamma \theta.
    \end{equation*}
    Choosing a specific set of parameters such that $\eta\gamma=1$, for example $\eta = 1$ and $\gamma=1$, the gradient update becomes $\theta \Longleftarrow -  \nabla_\theta \left( f(\hat{x}(\theta)) \right)$. The updated cost vector is now equal to minus the gradient of the integrality loss. Using the chain rule, we obtain 
    \begin{equation}
         \theta \Longleftarrow -  (J_{\theta} \hat{x}(\theta))^\top \cdot \nabla_{\hat{x}(\theta)} f(\hat{x}(\theta)),
    \end{equation}
    which simplifies to $\theta \Longleftarrow \nabla_{\hat{x}(\theta)} f(\hat{x}(\theta))$ when taking the Jacobian as minus the identity matrix as in Equation~\eqref{eq:minut_id}. The integrality loss with $p=1$ is piecewise linear. Its gradient is given by
    \begin{equation}
        \label{eq:grad_minx1mx}
        \nabla_{\hat{x}(\theta)} f(\hat{x}(\theta))= \begin{cases}
                                    -1 & \text{if } \hat{x}(\theta) > 0.5,\\
                                    1  & \text{otherwise.}
                                \end{cases}
    \end{equation}
    In Section \ref{sec:trad_pump}, we noticed that the original pump solves a sequence of problems $\argmin\{\theta^\top x ~|~ x\in \hat{\mathcal{X}}\}$, where $\theta_i= -1$ if $\round{x}_i =1$ and $\theta_i= 1$ otherwise. Hence, the sequences of problems solved by the two algorithms are equivalent.
\end{proof}

\begin{remark}
    For conciseness, we do not discuss the cycle detection and restarts operations in the proof of Proposition~\ref{prop:equivalence}. The cycle detection can be kept the same as the original algorithm since the same information is available. It is direct to define the restart operations by flipping or perturbing $\theta$ rather than $\round{x}$. Flipping the $i$-th coordinate of $\round{x}$ (i.e., changing its value from 1 to 0 or vice-versa) is equivalent to multiplying the $i$-th coordinate of $\theta$ by $-1$.
\end{remark}
Proposition~\ref{prop:equivalence} shows that the feasibility pump can be seen as a gradient descent with a very specific set of parameters. In particular, the algorithm parameters are such that $\eta\gamma=1$. An interesting consequence of these parameters is that the new cost vector obtained following the gradient update depends only on the gradient of the loss.

\textbf{Novel perspective.} The differentiable feasibility pump interpretation opens many possibilities to alter, extend, and improve the algorithm. For instance, without any change to the algorithm, it allows tuning the main hyper-parameters such as the step size $\eta$ and the regularization weight $\gamma$.

Further, it allows reassessing the loss function chosen to measure the non-integrality of a feasible solution. The loss $f(x) = \sum_i \min\{x_i,1-x_i\}$ (i.e., with order $p=1$) is piecewise linear. Therefore, it has a piecewise constant gradient. This means that the gradient of the integrality loss is independent of the magnitude of the integrality loss. This might lead to an unstable behavior. Intuitively, we would like the gradient of the integrality loss to be small when the loss is small, and large when the loss is large. This can be achieved by using a convex integrality loss, for instance by taking $p > 1$ as illustrated in Figure~\ref{fig:integ_loss}. Notice that this recommendation contrasts with the existing literature, especially the work of \citet{DeSantis2014feasibility}. The authors investigate concave loss functions, justified by their reinterpretation of the feasibility pump algorithm as a Frank-Wolfe algorithm. In contrast, our interpretation as a gradient descent justifies using a convex loss. Finally, it is notable that the Jacobian surrogate $J_\theta \hat{x}(\theta) = - I_n$ is used implicitly even though it was formalized around twenty years after the feasibility pump algorithm.
\section{Extending the differentiable pump}
\label{sec:extension}

In this section, we extend the differentiable pump beyond a simple reinterpretation. We present a generalized loss function and introduce a novel loss component to take into account the infeasibility of the rounded solution. We discuss the motivation for this loss and show how to differentiate it efficiently by introducing a differentiable rounding operation. Finally, we conclude our methodology by showing how this general framework encompasses several existing feasibility pump algorithms.

\subsection{General loss function.}
We consider the general loss given by
\begin{equation}
    \label{eq:gen_loss}
    \loss(\theta) = \alpha C(\theta) +  \beta f(\hat{x}(\theta)) + \lambda g(\round{\hat{x}(\theta)}) + \gamma \Omega(\theta),
\end{equation}
where $(\alpha, \beta, \lambda, \gamma)$ is a vector of positive weights. This general loss has four components. The first component~$C$ measures the cost of the solutions of the feasibility pump with respect to the objective of the problem. It is motivated by the idea that a feasibility pump should provide a primal solution that is not only feasible but also of good quality, i.e., with a low cost \citep{Achterberg2007improving}. It can be taken for instance as $C(\theta) = c^\top \hat{x}(\theta)$, $C(\theta) = c^\top \round{\hat{x}(\theta)}$, or a convex combination of them. The second component~$f$ aims to minimize the non-integrality of the solution. Hence, it depends only on the feasible solution $\hat{x}(\theta)$. The third component~$g$ is a novel loss function that depends only on the rounded solution $\round{\hat{x}(\theta)}$. It minimizes the infeasibility of the rounded solution, thus mirroring the previous term. Finally, the fourth term is the regularization of the cost vector. The remainder of this section presents a feasibility loss and discusses how to adapt the rounding operation to propagate a meaningful gradient.

The general loss function in Equation~\eqref{eq:gen_loss} induces the computational graph shown in \cref{fig:gen_pipeline}. This figure highlights the two main operations of the algorithm: solving the linear relaxation and rounding the feasible solution. Performing a gradient-descent step on $\loss(\theta)$ implies differentiating through these two operations.
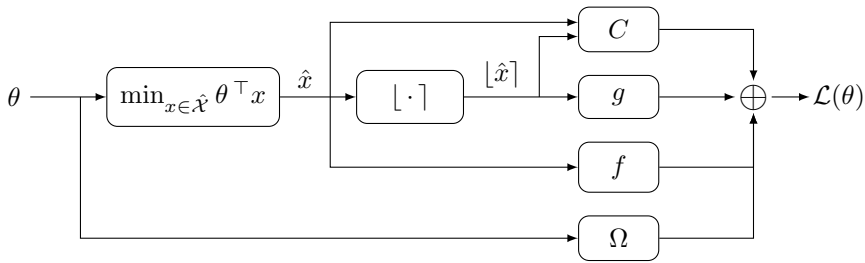
\begin{figure}[ht!]
    \centering
    \begin{tikzpicture}[
      auto,
      >=latex,
]
% Define tikz styles
\tikzstyle{bloc} = [
      rectangle,
      draw,
      text centered,
      inner sep=0.5em,
      rounded corners,
      minimum width=4em,
      minimum height=2em
]
\tikzstyle{loss} = [
      bloc,
      minimum width=3em,
      minimum height=1.5em
]
% - - - Draw figure - - -
% Draw blocks
\node (input) {$\theta$};
\node (argmin) [bloc, right=1cm of input.east] {$\min_{x \in \hat{\mathcal{X}}} \theta\,^\top x$};
\node (round) [bloc, right=1cm of argmin.east, anchor=west] {$\round{\, \cdot \,}$};
\node (feas_loss) [loss, right=1.5cm of round.east, anchor=west] {$g$};
\node (cost_loss) [loss, above = 0.3cm of feas_loss] {$C$};
\node (int_loss) [loss, below = 0.3cm of feas_loss] {$f$};
\node (reg_loss) [loss, below = 0.3cm of int_loss] {$\Omega$};
\node (sum) [inner sep = 1pt, right=1cm of feas_loss.east] {\LARGE $\oplus$};
\node (stop) [inner sep = 1pt, right=0.5cm of sum.east] {$\loss(\theta)$};

% Draw arrows
\draw[->] (input) -- (argmin);
\draw[->] ($(input.east)!0.66!(argmin.west)$) |- (reg_loss);
\draw[->] (argmin) -- node [above, pos=0.33] (x_lp) {$\hat{x}$} (round);
\draw[->] ($(argmin.east)!0.66!(round.west)$) |- (int_loss);
\draw[->] ($(argmin.east)!0.66!(round.west)$) |- (cost_loss.170);
\draw[->] (round) -- node [above, pos=0.33] (x_lp) {$\round{\hat{x}}$} (feas_loss);
\draw[->] ($(round.east)!0.66!(feas_loss.west)$) |- (cost_loss.190);
\draw[->] (cost_loss) -| (sum);
\draw[->] (feas_loss) -- (sum);
\draw[->] (int_loss) -| (sum);
\draw[->] (reg_loss) -| (sum);
\draw[->] (sum) -- (stop);
\end{tikzpicture}%
    \caption{Computational graph of the differentiable feasibility pump algorithm.}
    \label{fig:gen_pipeline}
\end{figure}

\subsection{Feasibility loss.}
The main novelty of this generalized loss is taking into account the infeasibility of the rounded solution $\round{\hat{x}(\theta)}$. As previously discussed, most existing approaches only consider the non-integrality of $\hat{x}(\theta)$, and the update of the cost vector at each iteration is agnostic to the feasibility of the rounded solution. A notable exception is the approach of \citet{Frangioni2021constraints} that encourages the satisfaction of the constraints by modifying the integrality loss to take into account the slack and active constraints of the feasible solution.

Again, many loss functions could be used to measure the infeasibility of the rounded solution. A suitable loss function should be: (i)~equal to zero when all constraints are satisfied, (ii)~easily differentiable, and (iii)~not heavily impacted by the number of constraints or magnitude of the constraint matrices. To achieve this, we measure the infeasibility of the rounded solution element-wise, normalize the infeasibility by the constraint coefficients, and average over the constraints.

Let $s_j(\tilde{x}): \{0, 1\}^n \rightarrow \reals$ measure the slackness of the $j$-th constraint as
\begin{equation*}
    s_j(\tilde{x}) = \frac{b_j - A_j \tilde{x}}{\|[A_j \, b_j]\|_2},
\end{equation*}
where $A_j$ is the $j-$th row of the constraint matrix, $b_j$ is the $j-$th left-hand side coefficient, and $[A_j \, b_j]$ is their concatenation. We define $g_j: \mathcal{X} \rightarrow \reals $ to measure the infeasibility of the $j$-th constraint as
\begin{equation*}
    g_j(\tilde{x}) = \max(s_j(\tilde{x}) - \epsilon, 0) =  \begin{cases}
                        s_j(\tilde{x}), & \text{ if $\tilde{x}$ violates the $j$-th constraint},\\
                        0,  & \text{ otherwise,}
                    \end{cases}
\end{equation*}
where $\epsilon>0$ is a small constant introduced to avoid numerical instability. This function is known in the machine-learning community as the rectified linear unit~(ReLU). Hence, it can be compactly rewritten as $g_j(\tilde{x}) = \relu ( s_j(\tilde{x}) - \epsilon )$. Finally, we average the infeasibility over the constraints by taking
\begin{equation}
    \label{eq:feas_loss}
    g(\tilde{x})= \frac{\sum_{j=1}^m g_j(\tilde{x})}{m}.
\end{equation}

% \begin{remark}
%     By introducing the feasibility loss in the differentiable feasibility pump algorithm, we realign the loss function minimized in the gradient descent with its more general success condition: that the rounded solution is feasible.
% \end{remark}

\subsection{Differentiable rounding.}
\label{sec:diff_round}
The last step needed to obtain a fully differentiable feasibility pump algorithm is to find a surrogate gradient for the rounding operation. Here, we apply again techniques of differentiable optimization. Using perturbation, we show that this gradient can be expressed in closed-form and is especially efficient to compute.

Rounding is applied element-wise. Hence, we consider that $x$ is single dimensional. Our approach is based on the observation that rounding can be seen as a linear program, i.e., $\round{x}$ is the solution of the optimization problem given by
\begin{equation}
    \label{opt:round}
    \max_{y} \{ (x - 0.5)^\top y ~|~ y \in [0, 1]\}.
\end{equation}
This problem depends only on $x$ in its objective coefficients. Thus, it fits the class of problems studied extensively in the differentiable optimization literature. The rounding problem can be differentiated by applying the techniques discussed in Section~\ref{sec:diff_opt} such as taking the identity matrix as Jacobian. A more informative gradient can be obtained using perturbation.

Define the perturbed rounding operation as
\begin{equation}
    \label{eq:pert_round}
    \round{x}_\varepsilon = \Exp_Z [ \round{x + \varepsilon Z} ],
\end{equation}
where $Z$ is a random variable with positive and differentiable density on $\mathbb{R}$. Using Equation~\eqref{eq:pert_jacobian}, the Jacobian of the perturbed rounded solution can be computed as
\begin{equation}
    \label{eq:pertb_jac}
    J_x \round{x}_\varepsilon = \frac{1}{\varepsilon} \Exp_Z [Z \mid x+\varepsilon Z > 0.5] \Prob(x+\varepsilon Z > 0.5).
\end{equation}
This expression is obtained by Bayes's law. The expectation is also known as the inverse Mills ratio. The gradient of the perturbed rounded solution can be obtained in closed-form as 
\begin{equation}
    \label{eq:round_grad}
    J_x \round{x}_\varepsilon = \frac{1}{\varepsilon} \phi \left( \frac{0.5-x}{\varepsilon} \right),
\end{equation}
with $\phi$ the probability density function~(p.d.f.) of the standard Gaussian. Remarkably, we do not need to evaluate the expectation in Equation~\eqref{eq:pertb_jac} using a Monte Carlo approximation: it is sufficient to compute the p.d.f. of a standard Gaussian, which is very efficient. Hence, introducing a feasibility loss based on perturbed rounding does not increase the complexity of the feasibility pump. The computational bottleneck remains solving the relaxed problem $\hat{x}(\theta)$.

\begin{remark}
    This expression can be also obtained by observing that the perturbed rounding can be equivalently expressed as $\round{x}_\varepsilon = \Exp_Z [ \indicator(x + \varepsilon Z \ge 0.5) ]$ so that $\round{x}_\varepsilon = \Phi_Z(\frac{x-0.5}{\varepsilon}) $ with $\Phi_Z$ the cumulative distribution function of $Z$. This expression holds for any perturbation distribution in the exponential family. Its gradient can thus be directly deduced.
\end{remark}

\textbf{Soft rounding.} We call this differentiable rounding operator a \textit{soft} rounding, akin to \textsf{softmax} being a differentiable surrogate to the $\max$ operator. Other soft operators commonly used in machine learning include the differentiable top-$k$ layer \citep{Xie2020differentiable} or differentiable sorting \citep{Prillo2020softsort}. Soft rounding is also closely linked to straight-through estimators \citep{Bengio2013estimating}.

In the chain rules, the gradient of the perturbed rounding is can be seen as a smoothing term: the larger the influence of the rounding, the larger the gradient of the loss. To illustrate this result, we show the gradient as a function of $x$ in \cref{fig:soft_rounding}. The figure also shows how the perturbed rounding varies with the magnitude of the perturbation. As $\varepsilon$ increases, the soft rounding converges to a linear function and its gradient becomes constant, thus recovering the approximation of taking the identity matrix as in \cref{eq:minut_id}. Conversely, as $\varepsilon$ goes toward zero, the soft rounding gets closer to the true rounding, and the gradient sharpens toward the Dirac function at $x=0.5$.
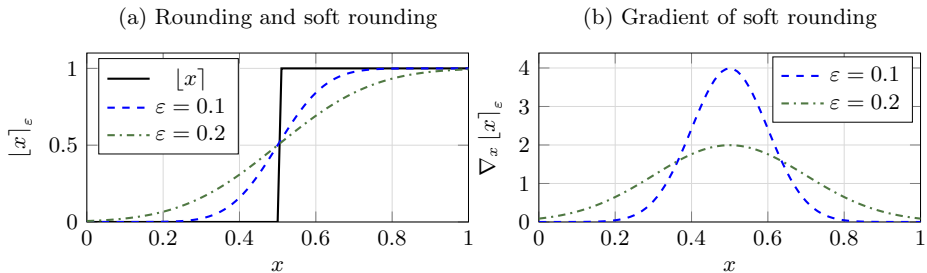
\begin{figure}[ht!]
    \centering
    \resizebox{0.98\linewidth}{!}{\begin{tikzpicture}
\begin{groupplot}[
    group style={
        group name=my plots,
        group size=2 by 1,
        xlabels at=edge bottom,
        ylabels at=edge left},
    height = 4cm,
    width  = 7cm,
    enlarge x limits = 0,
    xlabel = {$x$},
    xmin = 0,
    xmax = 1,
    ymin=0
    ]

    \nextgroupplot[title = {(a) Rounding and soft rounding}, font = \small, ylabel = {$\round{x}_\varepsilon$}, legend pos=north west]
    \addplot+[mark=none, line width=0.3mm, black] table [x index = {0}, y index = {1}, col sep=comma]{plots/csv/soft_rounding.csv};
    \addlegendentry{$\round{x}$}
    \addplot+[mark=none, line width=0.3mm, blue, dashed] table [x index = {0}, y index = {2}, col sep=comma]{plots/csv/soft_rounding.csv};
    \addlegendentry{$\varepsilon = 0.1$}
    \addplot+[mark=none, line width=0.3mm, darkgreen, dashdotted] table [x index = {0}, y index = {3}, col sep=comma]{plots/csv/soft_rounding.csv};
    \addlegendentry{$\varepsilon = 0.2$}

    \nextgroupplot[title = {(b) Gradient of soft rounding}, font = \small, ylabel = {$\nabla_x \round{x}_\varepsilon$}]
    \addplot+[mark=none, line width=0.3mm, blue, dashed] table [x index = {0}, y index = {4}, col sep=comma]{plots/csv/soft_rounding.csv};
    \addlegendentry{$\varepsilon = 0.1$}
    \addplot+[mark=none, line width=0.3mm, darkgreen, dashdotted] table [x index = {0}, y index = {5}, col sep=comma]{plots/csv/soft_rounding.csv};
    \addlegendentry{$\varepsilon = 0.2$}

\end{groupplot}
\end{tikzpicture}}
    \caption{Soft rounding and its gradient for varying $\varepsilon$.}
    \label{fig:soft_rounding}
\end{figure}

\subsection{Reductions to existing algorithms.}
We specify here how many versions of the feasibility pump algorithm can be seen as specific cases of our generalization. To show this, we first need to observe that Algorithm~\ref{alg:gen_fp} can still be applied when modifying the rounding or linear relaxation. Such modifications include for instance \citet{Fischetti2009feasibility}, who modify the rounding operation to take into account the feasibility of the rounded solution.

Hence, we only need to show how the generalized loss in Equation~\eqref{eq:gen_loss} can be specialized to recover existing algorithms, such as the one of \citet{Achterberg2007improving} who introduce a term to account for the cost of the solution, or \citet{Frangioni2021constraints} who introduce feasibility considerations into the loss function by reweighting the non-integrality loss depending on the active constraints of the solution $\hat{x}(\theta)$.

In \cref{tab:equivalences}, we show how the differentiable pump can reduce to existing variants of the feasibility pump. All these variants are based on the special case of taking $J_{\theta}\hat{x}(\theta)=-I$ as Jacobian of the linear solution, and regularizing the cost vector $\theta$ with parameters such that $\gamma \eta = 1$. The table shows that most variations of the feasibility pump extend the integrality loss or modify the restart or rounding components. A tick symbol~(\tick) indicates that a component of the feasibility pump is modified compared to the original algorithm whereas a cross symbol~(\cross) indicates that the component is kept as the original one.
\begin{table}[!ht]
    \caption{Existing feasibility pumps as special cases of the differentiable pump.}
    \label{tab:equivalences}
    \centerline{
    \begin{threeparttable}
        \small
        \begin{tabular}{rcccc}
            \toprule
                    & \multicolumn{2}{c}{Loss function} & \multicolumn{2}{c}{Modified algorithm}\\
            \cmidrule(lr){2-3}\cmidrule(lr){4-5}
                    & Integrality loss $f(x)$ & Cost $C(\theta)$ & Restart & Rounding \\
            \midrule
            \citet{Achterberg2007improving} & $\sum_{i=1}^n \min\{x_i, 1-x_i\}$     & $c^\top \hat{x}(\theta)$  & \cross  & \cross \\
            \citet{Fischetti2009feasibility}& $\sum_{i=1}^n \min\{x_i, 1-x_i\}$     & \cross                & \cross  & \tick  \\
            \citet{Baena2011using}          & $\sum_{i=1}^n \min\{x_i, 1-x_i\}$     & $c^\top \hat{x}(\theta)$  & \cross  & \tick  \\
            \citet{DeSantis2013new}         & $\lambda f_1(x) + (1-\lambda) f_2(x)$ & $c^\top \hat{x}(\theta)$  & \tick   & \cross \\
            \citet{Geissler2017penalty}     & $\sum_{i=1}^n \mu_i\min\{x_i, 1-x_i\}$  & $c^\top \hat{x}(\theta)$  & \tick   & \tick  \\
            \citet{Dey2018improving}        & $\sum_{i=1}^n \min\{x_i, 1-x_i\}$     & \cross                & \tick   & \cross \\
            \citet{Frangioni2021constraints}& $f_3(x, Ax-b)$                        & \cross                & \cross  & \cross \\
            \bottomrule
        \end{tabular}
        \begin{tablenotes}
            \small
            \item Note: $f_1$ and $f_2$ are concave losses based on exponential, logistic, hyperbolic, or logarithmic functions. $\{\mu_i\}_{i=1}^n$ is a set of positive weights that increase monotonously with the iterations of the algorithms. $f_3$ is a non-convex function that depends on the slack and active constraints of the feasible solution.
        \end{tablenotes}
    \end{threeparttable}
    }
\end{table}
\section{Numerical Study}
\label{sec:numerical}

In this section, we investigate the performance of the differentiable feasibility pump. First, we show that substantial performance improvements can be obtained when modifying the parameters of the original algorithm. Second, we investigate the value of the feasibility loss introduced in \cref{sec:extension}. Finally, we analyze the sensitivity of feasibility pump algorithms to variations in some of the key steps highlighted in \cref{sec:generalized} such as the gradient-descent algorithm and the Jacobian surrogate.

We run experiments on a set of $125$ instances from MIPLIB 2017 \citep{Gleixner2021}. We select instances with continuous or binary variables and exclude exceptionally large instances. The list of instances used is given in \cref{app:num}. All linear problems are solved using the commercial solver \texttt{Gurobi}. To ensure consistent and reliable results, we tune the solver's parameters to use a single thread with the smallest possible tolerance for optimality and feasibility. The differentiable pump is implemented in \texttt{python} using the \texttt{pytorch} package for automatic differentiation. The differentiable optimization models are adapted from the \texttt{PyEPO} package of \citet{tang2022pyepo}. The code used to run all the experiments in this paper is openly accessible at \url{https://github.com/MCacciola94/DiffPump}.

\subsection{Experimental setting.}
We investigate several variants of the differentiable feasibility pump, with increasing flexibility compared to the original algorithm. All methods use the general loss function given in Equation~\eqref{eq:gen_loss} with a specific set of weights $(\beta, \lambda, \gamma)$ and step size $\eta$. We do not consider the cost component of the loss (i.e., we set $\alpha = 0$) in order to better compare the results of the differentiable pump with the ones of the original algorithm.

The integrality loss~$f$ is taken as in \cref{eq:integrality_loss} with $p$ denoting its order. The feasibility loss~$g$ is taken as in \cref{eq:feas_loss}. The regularization is given by $\Omega(\theta) = \|\theta\|^2 / 2$. Unless otherwise specified, all methods use minus the identity matrix as a surrogate for the Jacobian of the $\argmin$ of the linear relaxation. All variants using the feasibility loss use soft rounding with $\varepsilon = 0.15$. All methods use the same cycle detection method, restarts, and stopping criteria. Finally, all variants use a limit of $\nMax = 1000$ iterations. If a pump cannot find a feasible solution within the iteration limit, the experiment is considered failed.

The original algorithm is denoted as \texttt{FP}. We investigate four variants of the differentiable feasibility pump with increasing flexibility compared to \texttt{FP}. The first variant of the differentiable pump is denoted by \texttt{DP1}. It changes the weight of the regularization loss~$\gamma$ and the step size~$\eta$. The second variant \texttt{DP2} additionally changes the order~$p$ of the integrality loss. The third variant \texttt{DP3} uses only the feasibility loss with soft rounding as described in \cref{sec:diff_round}. The last variant \texttt{DP4} combines both loss functions with positive weights $\beta$ and $\lambda$ respectively, and is allowed to vary all hyperparameters. The selected methods and their best hyperparameters found using a grid search are shown in \cref{app:num}. \cref{tab:method_notation} summarizes the feasibility pumps used in our experiments, as well as their best hyperparameter values found using a grid search.
\begin{table}[H]
    \centering
    \caption{Variants of the differentiable feasibility pump used in main experiments and their best hyperparameter values.}
    \label{tab:method_notation}
    \begin{tabular}{llccccc}
       \toprule
             & Description                          & $\eta$ & $\beta$ & $\lambda$ & $\gamma$ & $p$\\
       \midrule
       \texttt{FP}  & Original feasibility pump                 &  1    &  1   &  0     &  1    & 1\\
       \texttt{DP1} & Change only $\gamma$ and $\eta$           &  1    &  1   &  0     &  0.95 & 1\\
       \texttt{DP2} & Change $\gamma$, $\eta$, and $p$          &  0.8  &  1   &  0     &  0.1  & 2\\
       \texttt{DP3} & Use only feasibility loss                 &  0.3  &  0   &  1     &  1    & -\\
       \texttt{DP4} & Combine integrality and feasibility loss  &  0.6  & 10   &  1e-3  &  0.1  & 2\\
       \bottomrule
    \end{tabular}
\end{table}

We consider three main performance metrics: (i)~the number of instances for which the algorithm did not find a solution within the limited number of iterations, (ii)~the total number of iterations of the algorithm over all instances, and (iii)~the percentage of iterations in which a restart operation was performed. For instance, a ratio of $50\%$ means that the algorithm performed a perturbation or flip operation every two iterations on average.

\subsection{Main results.}
Our main experimental results are presented in \cref{tab:main_results}. The table shows the number of fails, the total number of iterations, and the restart ratio for all feasibility pump variants. It shows that several variants of the differentiable feasibility pump achieve better performance than the original algorithm.
\begin{table}[!ht]
    \centering
    \caption{Main experimental results: number of fails, iterations, and randomization ratio over all instances.}
    \label{tab:main_results}
    \begin{tabular}{lccccc}
        \toprule
                      & \texttt{FP} & \texttt{DP1} & \texttt{DP2} &  \texttt{DP3} & \texttt{DP4}\\
        \midrule
        Number of fails             & 28        & 28        & 21        &   21      & 20        \\
        Total number of iterations  & $30071$ & $30037$ & $24084$ & $25684$ & $23383$ \\
        Restart ratio         & $43.71\%$ & $42.79\%$ & $24.61\%$ & $3.9\%$   & $25.98\%$ \\
        \bottomrule
    \end{tabular}
\end{table}

A first observation is that simply changing the gradient-update step size $\eta$ and the weight of the regularization loss $\gamma$ has minimal impact: the performance of \texttt{DP1} stays very close to the one of the original pump. The fact that changing the values of the parameters does not bring any significant improvement is remarkable. It suggests that the parameters of the original algorithm are a local optimum. In fact, \cref{app:num} shows that the best parameters of \texttt{DP1} are almost identical to the one of~\texttt{FP}.
\begin{result}
    The parameters of the original feasibility pump are a local optimum.
\end{result}

However, Table~\ref{tab:main_results} shows that changing the order of the integrality loss to $p=2$ and jointly adapting the step size and regularization weight leads to significant improvements both in the number of iterations and the number of instances solved. Indeed, \texttt{DP2} solves $7$ more instances than \texttt{FP} and uses more than $5000$ fewer iterations, a reduction of around $20\%$. Further, it leads to fewer flipping and restarts, having almost half the restart ratio of the original algorithm. This suggests that the new loss paired with $\eta \gamma \neq 1$ make the algorithm less likely to get stuck in a cycle.
\begin{result}
    The integrality loss with order $p=2$ leads to a more performant and stable feasibility pump. Our hypothesis that using a locally convex loss improves gradient descent is supported experimentally.
\end{result}

The third variant, \texttt{DP3}, also outperforms the original algorithm \texttt{FP}. \cref{tab:main_results} shows that it achieves almost the same performance of \texttt{DP2} but is substantially more stable. Indeed, its restart ratio is one order of magnitude smaller than that of \texttt{FP}. \texttt{DP3} behaves more closely to a pure gradient-descent algorithm as it only rarely uses the heuristic restart operations. This suggests that the loss landscape induced by the feasibility loss has fewer local minima.

\begin{result}
    Using only the feasibility loss also provides significant improvements compared to the original algorithm. It leads to an especially stable algorithm that only rarely uses restart operations.
\end{result}

Finally, \cref{tab:main_results} shows that \texttt{DP4}, which uses a combination of feasibility loss and integrality loss can provide a further improvement, although it is not substantial. The algorithm can find the feasible solution of one more instance and slightly reduce the total number of iterations. Interestingly, its restart ratio is similar to the one of~$\texttt{DP2}$.
\begin{result}
    Combining the feasibility and integrality losses can provide a small performance increase compared to using them individually.
\end{result}

\subsection{Sensitivity analyses.}
We now study the influence of key components of our algorithm. First, we provide a detailed analysis of how the performance of a feasibility pump varies with its hyperparameters. Then, we investigate changing the estimation of the Jacobian of the $\argmin$. Finally, we study how different gradient-descent algorithms can be implemented. Additional sensitivity results are provided in \cref{app:num}.

\subsubsection{Sensitivity to hyperparameters.}
The results of the hyperparameter grid search provide a sensitivity analysis of the impact of the hyperparameters $\eta$ and $\gamma$ on \texttt{DP1}. In \cref{fig:abl}, we show the impact of changing a single parameter while keeping the other equal to $1$. The figure shows that modifying a single parameter does not improve the original pump algorithm. It confirms our observation that the original algorithm is remarkably robust to its hyperparameters and is locally optimal.
\begin{figure}[ht]
    \centering
    \resizebox{0.98\linewidth}{!}{\begin{tikzpicture}
\begin{groupplot}[
    group style={
        group name=my plots,
        group size=2 by 1,
        xlabels at=edge bottom,
        ylabels at=edge left},
    height = 4cm,
    width  = 7cm,
    enlarge x limits = 0,
    enlarge y limits = 0,
    xmin = 0.2,
    xmax = 1,
    ylabel={Number of fails}
    ]

    \nextgroupplot[title = {(a) $\eta = 1$}, font = \small, xlabel=Regularization weight $\gamma$]
    \addplot table [x index = {0}, y index = {1}, col sep=comma]{plots/csv/fp_sensitivity.csv};

    \nextgroupplot[title = {(b) $\gamma = 1$}, font = \small, xlabel=Step size $\eta$]
    \addplot table [x index = {2}, y index = {3}, col sep=comma]{plots/csv/fp_sensitivity.csv};

\end{groupplot}
\end{tikzpicture}}
    \caption{Number of instances that cannot be solved by \texttt{DP1} for varying values of $\eta$ and $\gamma$.}
    \label{fig:abl}
\end{figure}
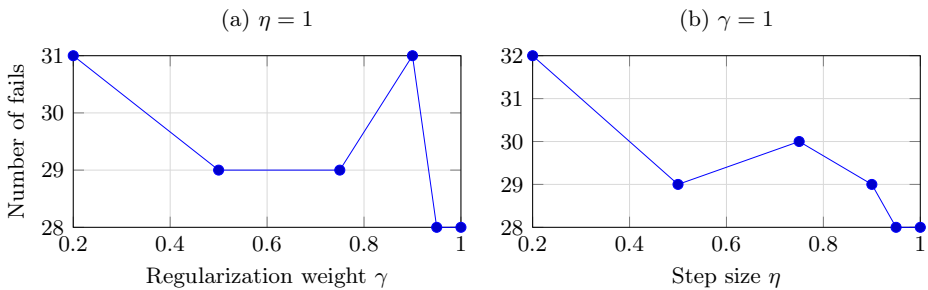

The results of the hyperparameter grid search over $\eta$ and $\gamma$ when using $p=1$ and $p=2$ are provided in \cref{tab:p1_grid} and \cref{tab:p2_grid}, respectively. They show that no significant improvement can be obtained when keeping $p=1$, whereas many configurations of $\texttt{DP2}$ that use $p=2$ outperform \texttt{FP}. In fact, all sets of parameters shown in \cref{tab:p2_grid} provide better performance than the original algorithm \texttt{FP}. This confirms that using a locally convex integrality loss function is more suitable for gradient descent.
\begin{table}[ht]
\centering
\caption{Sensitivity analysis of \texttt{DP1}.}
\label{tab:p1_grid}
\small
\begin{minipage}[t]{.3\linewidth}
\begin{tabular}{cccc}
    \toprule
    $\eta$ & $\gamma$ & Iters & Fails\\ 
    \midrule
    0.2 & 0.2 & 39492 & 35\\ 
    0.2 & 0.5 & 37617 & 35\\ 
    0.2 & 0.7 & 38878 & 36\\ 
    0.2 & 0.9 & 33849 & 31\\ 
    0.2 & 0.95 & 34781 & 30\\ 
    0.2 & 1 & 36489 & 32\\ 
    0.5 & 0.2 & 36130 & 33\\ 
    0.5 & 0.5 & 33130 & 31\\ 
    0.5 & 0.7 & 32692 & 30\\ 
    0.5 & 0.9 & 33717 & 30\\ 
    0.5 & 0.95 & 32994 & 29\\ 
    0.5 & 1 & 32405 & 29\\ 
    \bottomrule
\end{tabular}
\end{minipage}\hfill
\begin{minipage}[t]{.3\linewidth}
\begin{tabular}{cccc}
    \toprule
    $\eta$ & $\gamma$ & Iters & Fails\\ 
    \midrule
    0.7 & 0.2 & 36329 & 32\\ 
    0.7 & 0.5 & 31610 & 27\\ 
    0.7 & 0.7 & 32066 & 29\\ 
    0.7 & 0.9 & 33288 & 29\\ 
    0.7 & 0.95 & 33506 & 29\\ 
    0.7 & 1 & 32825 & 30\\
    0.9 & 0.2 & 33983 & 30\\ 
    0.9 & 0.5 & 33653 & 30\\ 
    0.9 & 0.7 & 31820 & 30\\ 
    0.9 & 0.9 & 32857 & 30\\ 
    0.9 & 0.95 & 31275 & 28\\ 
    0.9 & 1 & 32635 & 29\\ 
    \bottomrule
\end{tabular}
\end{minipage}\hfill
\begin{minipage}[t]{.3\linewidth}
\begin{tabular}{cccc}
    \toprule
    $\eta$ & $\gamma$ & Iters & Fails\\ 
    \midrule
    0.95 & 0.2 & 35424 & 32\\ 
    0.95 & 0.5 & 32940 & 30\\ 
    0.95 & 0.7 & 32382 & 29\\ 
    0.95 & 0.9 & 31451 & 29\\ 
    0.95 & 0.95 & 32886 & 30\\ 
    0.95 & 1 & 31411 & 28\\ 
    1 & 0.2 & 34551 & 31\\ 
    1 & 0.5 & 31620 & 29\\ 
    1 & 0.7 & 32287 & 29\\ 
    1 & 0.9 & 34064 & 31\\ 
    \textbf{1} & \textbf{0.95} & \textbf{30037} & \textbf{28}\\ 
    1 & 1 & 30071 & 28\\ 
    \bottomrule
\end{tabular}
\end{minipage}
\end{table}

\begin{table}[ht]
\centering
\caption{Sensitivity analysis of \texttt{DP2}.}
\label{tab:p2_grid}
\small
\begin{minipage}[t]{.45\linewidth}
\centering
\begin{tabular}{cccc}
    \toprule
    $\eta$ & $\gamma$ & Iters & Fails\\ 
    \midrule
    0.1 & 0.1 &  30456 & 25\\ 
    0.1 & 0.2 &  29639 & 25\\ 
    0.3 & 0.1 &  26840 & 23\\ 
    0.3 & 0.2 &  25632 & 22\\ 
    0.3 & 0.3 &  26690 & 23\\ 
    0.3 & 0.5 &  26785 & 22\\ 
    0.3 & 0.7 &  26352 & 24\\ 
    0.5 & 0.1 &  25214 & 22\\
    0.6 & 0.1 &  25114 & 21\\ 
    0.6 & 0.2 &  25722 & 23\\
    \bottomrule
\end{tabular}
\end{minipage}
\begin{minipage}[t]{.45\linewidth}
\centering
\begin{tabular}{cccc}
    \toprule
    $\eta$ & $\gamma$ & Iters & Fails\\ 
    \midrule
    0.6 & 0.3 &  25799 & 22\\ 
    0.6 & 0.5 &  27590 & 25\\ 
    0.6 & 0.7 &  25662 & 23\\ 
    0.75 & 0.3 & 26530 & 23\\ 
    0.75 & 0.5 & 26050 & 24\\ 
    0.75 & 0.7 & 28767 & 27\\ 
    \textbf{0.8} & \textbf{0.1} & \textbf{24084} & \textbf{21}\\ 
    1 & 0.3 & 26945 & 22\\ 
    1 & 0.5 & 27161 & 23\\ 
    1 & 0.7 & 28288 & 26\\ 
    \bottomrule
\end{tabular}
\end{minipage}
\end{table}

\subsubsection{Surrogate Jacobian.}
So far, all variants of the differentiable feasibility pump use the same surrogate Jacobian for the $\argmin$ of the linear relaxation: $J_{\hat{x}(\theta)} \hat{x}(\theta) = -I_n$. Yet, several methods have been proposed as discussed in \cref{sec:diff_opt}. We present experiments using the perturbation Jacobian surrogate given in Equation~\eqref{eq:pert_jacobian}, where we set the scaling parameter to $\varepsilon = 1.0$. This surrogate is based on a Monte Carlo approximation, where $M$ is the number of samples. Any positive value of $M$ leads to a significant increase in the computational time per iteration, since as many new linear problems need to be solved at each gradient computation. In our experiments, we use $M=1$, which leads to a highly noisy estimator.

We present the results of the original feasibility pump \texttt{FP} as well as \texttt{DP2} with perturbation Jacobian in \cref{tab:jac_est_results}. Even after hyperparameter tuning, the performance is significantly worse than using the simpler and faster estimator $J_{\hat{x}(\theta)} \hat{x}(\theta) = -I_n$. This suggests that the perturbation method with $M=1$ is too unstable to provide good performance in a feasibility pump context.
\begin{table}[!ht]
    \centering
    \caption{Impact of using the perturbation method for surrogate Jacobian with \texttt{FP} and \texttt{DP2}.}
    \label{tab:jac_est_results}
    \begin{tabular}{lcccc}
        \toprule
            & \multicolumn{2}{c}{\texttt{FP}}  & \multicolumn{2}{c}{\texttt{DP2}} \\
        \cmidrule(lr){2-3}\cmidrule(lr){4-5}
            & $-I_n$ & Perturbation &  $-I_n$ & Perturbation \\
        \midrule
        Number of fails            & 28     &      98   &  21       &   93    \\
        Total number of iterations & 30071 &  100974  & 24084    &  96011 \\
        \bottomrule
    \end{tabular}
\end{table}

While we investigated alternative methods such as adding a quadratic regularization term and differentiating the KKT conditions \citep{Wilder2019} or interior-point methods \citep{Mandi2020}, we could not find a method that scales successfully to the size of combinatorial problems considered in our experiments. This is because differentiable optimization typically focuses on ``easier'' optimization problems that can be repeatedly solved to optimality. In contrast, our experiments focus on large problems for which even finding a feasible solution is challenging.

\subsubsection{Gradient-descent algorithm.}
Our reinterpretation of the feasibility pump as a gradient-descent algorithm allows us to extend it to more advanced gradient-descent algorithms. In \cref{tab:gd_alg_results}, we present computational results when varying the gradient-update method to use either gradient-descent with momentum or the Adam algorithm \citep{Kingma2014adam}. The traditional gradient-descent algorithm is denoted by \texttt{GD}. The results show that changing the gradient-descent algorithm can benefit the original algorithm. However, it only slightly improves our extension \texttt{DP2} with a lower momentum value.
\begin{table}[!ht]
    \centering
    \caption{Performance with different gradient-descent algorithm}
    \label{tab:gd_alg_results}
    \begin{tabular}{lccccccc}
        \toprule
            & \multicolumn{3}{c}{\texttt{FP}}  & \multicolumn{4}{c}{\texttt{DP2}} \\
\cmidrule(lr){2-4}\cmidrule(lr){5-8}
            & GD & Mom. $0.5$ & Adam &  GD & Mom. $0.1$ & Mom. $0.5$ & Adam \\
\midrule
        Nb. fails      & 28     &  26    &   26   &   21   &   21   &   27   &   44  \\
        Nb. iters & 30071 & 27872 & 29019 & 25114 & 25020 & 29371 & 45408\\
\bottomrule
    \end{tabular}
\end{table}

\section{Conclusion}
\label{sec:conclusion}
This paper reinterprets the feasibility pump algorithm of \citet{Fischetti2005feasibility} as a gradient-descent algorithm, building upon recent works on differentiable optimization. This interpretation generalizes many existing feasibility pump algorithms and allows novel extensions in terms of designing a general loss function or gradient-update algorithms. For instance, our experiments show that using a locally convex integrality loss or a differentiable feasibility loss based on soft rounding can provide substantial performance improvements.

From a broader point of view, this paper is inscribed in the emerging framework of reinterpreting well-studied operations research methods in a differentiable optimization perspective, akin to the continuous cutting-plane generalization of \citet{Chetelat2023continuous} and its differentiable implementation by \citet{Dragotto2023differentiable}. As such, it paves the way for many promising research avenues. In fact, because so many variations of the differentiable feasibility pump are possible, only a limited array of them could be investigated in this paper. We discuss below other potential directions.

First, we kept the cycle detection and restart components identical to the ones of the initial algorithm. Investigating the combination of our work with existing variations of these components could also provide valuable insights. Another promising direction is deriving an estimator of the Jacobian of the $\argmin$ that is tailored to the differentiable feasibility pump. For instance, this could be done by taking into account the constraints of the problem. A third promising direction is adapting the proposed framework to non-linear or general integer problems. The feasibility loss introduced in this paper might prove especially relevant in that case, as it is shown to get stuck in cycles much less frequently.

\section*{Acknowledgement}
The authors thank Youssouf Emine and David E. Bernal Neira for the momentous discussions.

\bibliographystyle{abbrvnat}
\bibliography{bibliography}

\appendix
\section{Appendix: Numerical Study}
\label{app:num}

\subsection{Instances.}
\cref{tab:listinst} details the instances of the MIPLIB library used in our experiments with their number of variables and constraints.
\captionof{table}{MIPLIB instances used in experiments.} \label{tab:listinst}
\vspace{-0.3cm}
\begin{scriptsize}
\begin{longtable}{lrrrr}
    \toprule
    Name & Variables & Binary & Constraints & Equalities\\ 
    \midrule
    academictimetablesmall & 28926 & 28926 & 23294 & 1379\\ 
    air05 & 7195 & 7195 & 426 & 426\\ 
    app1-1 & 2480 & 1225 & 4926 & 1226\\ 
    app1-2 & 26871 & 13300 & 53467 & 13301\\ 
    assign1-5-8 & 156 & 130 & 161 & 31\\ 
    b1c1s1 & 3872 & 288 & 3904 & 1280\\ 
    beasleyC3 & 2500 & 1250 & 1750 & 500\\ 
    binkar101 & 2298 & 170 & 1026 & 1016\\ 
    blp-ar98 & 16021 & 15806 & 1128 & 215\\ 
    blp-ic98 & 13640 & 13550 & 717 & 90\\ 
    bnatt400 & 3600 & 3600 & 5614 & 1586\\ 
    bppc4-08 & 1456 & 1454 & 111 & 20\\ 
    cbs-cta & 24793 & 2467 & 10112 & 244\\ 
    chromaticindex1024-7 & 73728 & 73728 & 67583 & 18431\\ 
    chromaticindex512-7 & 36864 & 36864 & 33791 & 9215\\ 
    cmflsp50-24-8-8 & 16392 & 1392 & 3520 & 1096\\ 
    CMS7504 & 11697 & 7196 & 16381 & 1\\ 
    cod105 & 1024 & 1024 & 1024 & 0\\ 
    cost266-UUE & 4161 & 171 & 1446 & 1332\\ 
    csched007 & 1758 & 1457 & 351 & 301\\ 
    csched008 & 1536 & 1284 & 351 & 301\\ 
    cvs16r128-89 & 3472 & 3472 & 4633 & 0\\ 
    dano33 & 13873 & 69 & 3202 & 1224\\ 
    dano35 & 13873 & 115 & 3202 & 1224\\ 
    decomp2 & 14387 & 14387 & 10765 & 470\\ 
    drayage-100-23 & 11090 & 11025 & 4630 & 210\\ 
    drayage-25-23 & 11090 & 11025 & 4630 & 210\\ 
    dws008-01 & 11096 & 6608 & 6064 & 513\\ 
    eil33-2 & 4516 & 4516 & 32 & 32\\ 
    eilA101-2 & 65832 & 65832 & 100 & 100\\ 
    exp-1-500-5-5 & 990 & 250 & 550 & 250\\ 
    fast0507 & 63009 & 63009 & 507 & 0\\ 
    fastxgemm-n2r6s0t2 & 784 & 48 & 5998 & 144\\ 
    fhnw-binpack4-48 & 3710 & 3605 & 4480 & 0\\ 
    glass4 & 322 & 302 & 396 & 36\\ 
    glass-sc & 214 & 214 & 6119 & 0\\ 
    gmu-35-40 & 1205 & 1200 & 424 & 5\\ 
    gmu-35-50 & 1919 & 1914 & 435 & 5\\ 
    graph20-20-1rand & 2183 & 2183 & 5587 & 74\\ 
    h80x6320d & 12640 & 6320 & 6558 & 227\\ 
    irp & 20315 & 20315 & 39 & 39\\ 
    istanbul-no-cutoff & 5282 & 30 & 20346 & 221\\ 
    leo1 & 6731 & 6730 & 593 & 1\\ 
    leo2 & 11100 & 11099 & 593 & 1\\ 
    lotsize & 2985 & 1195 & 1920 & 600\\ 
    mad & 220 & 200 & 51 & 30\\ 
    markshare2 & 74 & 60 & 7 & 7\\ 
    markshare40 & 34 & 30 & 4 & 4\\ 
    mas74 & 151 & 150 & 13 & 0\\ 
    mas76 & 151 & 150 & 12 & 0\\ 
    mc11 & 3040 & 1520 & 1920 & 400\\ 
    mcsched & 1747 & 1745 & 2107 & 202\\ 
    milo-v12-6-r2-40-1 & 2688 & 840 & 5628 & 1008\\ 
    momentum1 & 5174 & 2349 & 42680 & 558\\ 
    n2seq36q & 22480 & 22480 & 2565 & 0\\ 
    n3div36 & 22120 & 22120 & 4484 & 0\\ 
    neos-1122047 & 5100 & 100 & 57791 & 0\\ 
    neos-1171448 & 4914 & 2457 & 13206 & 0\\ 
    neos-1171737 & 2340 & 1170 & 4179 & 0\\ 
    neos-1445765 & 20617 & 2150 & 2147 & 2146\\ 
    neos17 & 535 & 300 & 486 & 1\\ 
    neos-2978193-inde & 20800 & 64 & 396 & 332\\ 
    neos-2987310-joes & 27837 & 3051 & 29015 & 9342\\ 
    neos-3216931-puriri & 3555 & 3268 & 5989 & 579\\ 
    neos-3627168-kasai & 1462 & 535 & 1655 & 465\\ 
    neos-3754480-nidda & 253 & 50 & 402 & 0\\ 
    neos-4300652-rahue & 33003 & 20900 & 76992 & 6102\\ 
    neos-4387871-tavua & 4004 & 2000 & 4554 & 554\\ 
    neos-4763324-toguru & 53593 & 53592 & 106954 & 232\\ 
    neos-4954672-berkel & 1533 & 630 & 1848 & 525\\ 
    neos-5093327-huahum & 40640 & 64 & 51840 & 49792\\ 
    neos-5107597-kakapo & 3114 & 2976 & 6498 & 69\\ 
    neos-5188808-nattai & 14544 & 288 & 29452 & 5690\\ 
    neos-5195221-niemur & 14546 & 9792 & 42256 & 5690\\ 
    neos5 & 63 & 53 & 63 & 0\\ 
    neos-827175 & 32504 & 21350 & 14187 & 10512\\ 
    neos-860300 & 1385 & 1384 & 850 & 20\\ 
    neos-911970 & 888 & 840 & 107 & 35\\ 
    neos-933966 & 31762 & 27982 & 12047 & 0\\ 
    neos-957323 & 57756 & 57756 & 3757 & 0\\ 
    neos-960392 & 59376 & 59376 & 4744 & 1575\\ 
    net12 & 14115 & 1603 & 14021 & 552\\ 
    nexp-150-20-8-5 & 20115 & 17880 & 4620 & 54\\ 
    ns1116954 & 12648 & 7482 & 131991 & 235\\ 
    ns1208400 & 2883 & 2880 & 4289 & 339\\ 
    ns1830653 & 1629 & 1458 & 2932 & 565\\ 
    opm2-z10-s4 & 6250 & 6250 & 160633 & 0\\ 
    p200x1188c & 2376 & 1188 & 1388 & 200\\ 
    peg-solitaire-a3 & 4552 & 4552 & 4587 & 1367\\ 
    pg534 & 2600 & 100 & 225 & 0\\ 
    pg & 2700 & 100 & 125 & 100\\ 
    pk1 & 86 & 55 & 45 & 15\\ 
    qap10 & 4150 & 4150 & 1820 & 1820\\ 
    rail507 & 63019 & 63009 & 509 & 0\\ 
    ran14x18-disj-8 & 504 & 252 & 447 & 32\\ 
    rd-rplusc-21 & 622 & 457 & 125899 & 137\\ 
    reblock115 & 1150 & 1150 & 4735 & 0\\ 
    rmatr100-p10 & 7359 & 100 & 7260 & 1\\ 
    rmatr200-p5 & 37816 & 200 & 37617 & 1\\ 
    rocII-5-11 & 11523 & 11341 & 26897 & 781\\ 
    roi2alpha3n4 & 6816 & 6642 & 1251 & 2\\ 
    satellites2-40 & 35378 & 34324 & 20916 & 13560\\ 
    satellites2-60-fs & 35378 & 34324 & 16516 & 13560\\ 
    sct2 & 5885 & 2872 & 2151 & 360\\ 
    seymour1 & 1372 & 451 & 4944 & 0\\ 
    seymour & 1372 & 1372 & 4944 & 0\\ 
    sing326 & 55156 & 40010 & 50781 & 20825\\ 
    sing44 & 59708 & 43524 & 54745 & 22755\\ 
    sorrell3 & 1024 & 1024 & 169162 & 0\\ 
    sp150x300d & 600 & 300 & 450 & 150\\ 
    sp97ar & 14101 & 14101 & 1761 & 0\\ 
    sp98ar & 15085 & 15085 & 1435 & 0\\ 
    supportcase18 & 13410 & 13410 & 240 & 120\\ 
    supportcase26 & 436 & 396 & 870 & 0\\ 
    supportcase40 & 16440 & 2000 & 38192 & 100\\ 
    swath1 & 6805 & 2306 & 884 & 504\\ 
    swath3 & 6805 & 2706 & 884 & 504\\ 
    tbfp-network & 72747 & 72747 & 2436 & 2436\\ 
    thor50dday & 106261 & 53131 & 53360 & 230\\ 
    tr12-30 & 1080 & 360 & 750 & 360\\ 
    trento1 & 7687 & 6415 & 1265 & 1248\\ 
    uccase12 & 62529 & 9072 & 121161 & 24225\\ 
    uccase9 & 33242 & 8064 & 49565 & 11879\\ 
    uct-subprob & 2256 & 379 & 1973 & 901\\ 
    unitcal7 & 25755 & 2856 & 48939 & 2907\\ 
    \bottomrule
\end{longtable}
\end{scriptsize}

\subsection{Hyperparameter grid search.}
\cref{tab:feaslossgrid} shows the results of the hyperparameter grid search of \texttt{DP3}, which uses only the feasibility loss. \cref{tab:alllossgrid} shows the results of the hyperparameter grid search of \texttt{DP4}, which combines the integrality and feasibility losses. Finally, \cref{tab:momgrid} shows the sensitivity of using \texttt{FP} and \texttt{DP2} to the amount of momentum.
\begin{table}[!ht]
	\centering
	\caption{Sensitivity analysis of the differentiable pump \texttt{DP3} with only feasibility loss ($\beta = 0$ and $\lambda = 1$).}
	\label{tab:feaslossgrid}
	\begin{tabular}{ccccc}
		\toprule
		$\eta$ & $\gamma$ & Iters & Fails\\ 
		\midrule
		0.3 & 0.3 &    31468 & 27\\ 
		0.3 & 0.85 &    27649 & 22\\ 
		\textbf{0.3} & \textbf{1} &  \textbf{25684} & \textbf{21}\\ 
		0.75 & 0 &    47112 & 45\\ 
		0.75 & 0.3 &    26981 & 22\\ 
		0.75 & 0.85 &    28499 & 26\\ 
		0.75 & 0.85 &   28499 & 26\\ 
		0.75 & 1 &    30651 & 29\\ 
		1 & 0.3 &    27562 & 24\\ 
		1 & 0.85 &  32042 & 30\\
		1 & 1 & 41738 & 37\\ 
		\bottomrule
	\end{tabular}
\end{table}
\begin{table}[ht]
	\centering
	\caption{Sensitivity analysis of \texttt{DP4}, our most flexible differentiable feasibility pump.}
        \label{tab:alllossgrid}
        \begin{minipage}[t]{.45\linewidth}
        \small
	\begin{tabular}{ccccccc}
		\toprule
		$\eta$ & $\gamma$ & $p$ & $\lambda$ & $\beta$ & Iters & Fails\\ 
		\midrule
		0.6 & 0.1 & 2 & 1e-5 & 1e-5 & 29870 & 24\\ 
		0.6 & 0.1 & 2 & 1e-5 & 1e-3 & 27090 & 22\\ 
		0.6 & 0.1 & 2 & 1e-5 & 0.1 & 27110 & 22\\ 
		0.6 & 0.1 & 2 & 1e-5 & 1 & 25569 & 23\\ 
		0.6 & 0.1 & 2 & 1e-5 & 10 & 24717 & 22\\ 
		0.6 & 0.1 & 2 & 1e-3 & 1e-5 & 29504 & 22\\ 
		0.6 & 0.1 & 2 & 1e-3 & 1e-3 & 29192 & 23\\ 
		0.6 & 0.1 & 2 & 1e-3 & 0.1 & 26743 & 21\\ 
		0.6 & 0.1 & 2 & 1e-3 & 1 & 26435 & 23\\ 
		\textbf{0.6} & \textbf{0.1} & \textbf{2} & \textbf{1e-3} & \textbf{10} & \textbf{23383} & \textbf{20}\\ 
		0.6 & 0.1 & 2 & 0.1 & 1e-5 & 34799 & 28\\ 
		0.6 & 0.1 & 2 & 0.1 & 1e-3 & 28966 & 23\\ 
		0.6 & 0.1 & 2 & 0.1 & 0.1 & 27268 & 23\\ 
		0.6 & 0.1 & 2 & 0.1 & 1 & 26917 & 23\\ 
		0.6 & 0.1 & 2 & 0.1 & 10 & 24644 & 22\\ 
		0.6 & 0.1 & 2 & 1 & 1e-5 & 29072 & 21\\ 
		0.6 & 0.1 & 2 & 1 & 1e-3 & 27774 & 21\\ 
		0.6 & 0.1 & 2 & 1 & 0.1 & 26440 & 23\\ 
		0.6 & 0.1 & 2 & 1 & 1 & 27861 & 25\\ 
		0.6 & 0.1 & 2 & 1 & 10 & 25114 & 22\\ 
		0.6 & 0.1 & 2 & 10 & 1e-5 & 28883 & 22\\ 
		0.6 & 0.1 & 2 & 10 & 1e-3 & 30550 & 24\\ 
		0.6 & 0.1 & 2 & 10 & 0.1 & 27458 & 23\\ 
		0.6 & 0.1 & 2 & 10 & 1 & 27042 & 23\\ 
		0.6 & 0.1 & 2 & 10 & 10 & 23611 & 21\\
		\bottomrule
	\end{tabular}
        \end{minipage}\hfill
        \begin{minipage}[t]{.45\linewidth}
        \small
	\begin{tabular}{ccccccc}
		\toprule
		$\eta$ & $\gamma$ & $p$ & $\lambda$ & $\beta$ & Iters & Fails\\ 
		\midrule
		0.75 & 0.85 & 1 & 1e-5 & 1 & 33452 & 30\\ 
		0.75 & 0.85 & 1 & 1e-3 & 1 & 33250 & 30\\ 
		0.75 & 0.85 & 1 & 0.1 & 1 & 32465 & 30\\ 
		0.75 & 0.85 & 1 & 1 & 1e-5 & 27258 & 23\\ 
		0.75 & 0.85 & 1 & 1 & 1e-3 & 34490 & 32\\ 
		0.75 & 0.85 & 1 & 1 & 0.1 & 31347 & 28\\ 
		0.75 & 0.85 & 1 & 1 & 1 & 32950 & 30\\ 
		0.75 & 0.85 & 1 & 1 & 10 & 34155 & 30\\ 
		0.75 & 0.85 & 1 & 10 & 1 & 32838 & 30\\ 
		1 & 1 & 1 & 1e-5 & 1e-5 & 30727 & 27\\ 
		1 & 1 & 1 & 1e-5 & 1e-3 & 32878 & 29\\ 
		1 & 1 & 1 & 1e-5 & 0.1 & 33257 & 30\\ 
		1 & 1 & 1 & 1e-5 & 1 & 31460 & 29\\ 
		1 & 1 & 1 & 1e-5 & 10 & 31626 & 29\\ 
		1 & 1 & 1 & 1e-3 & 1e-5 & 32127 & 28\\ 
		1 & 1 & 1 & 1e-3 & 1e-3 & 33652 & 30\\ 
		1 & 1 & 1 & 1e-3 & 0.1 & 32110 & 28\\ 
		1 & 1 & 1 & 1e-3 & 1 & 34086 & 32\\ 
		1 & 1 & 1 & 1e-3 & 10 & 31740 & 29\\ 
		1 & 1 & 1 & 0.1 & 1e-5 & 32716 & 29\\ 
		1 & 1 & 1 & 0.1 & 1e-3 & 32233 & 29\\ 
		1 & 1 & 1 & 0.1 & 0.1 & 33702 & 30\\ 
		1 & 1 & 1 & 0.1 & 1 & 30762 & 28\\ 
		1 & 1 & 1 & 0.1 & 10 & 32163 & 30\\ 
		1 & 1 & 1 & 1 & 1e-5 & 28977 & 24\\ 
		1 & 1 & 1 & 1 & 1e-3 & 34225 & 32\\ 
		1 & 1 & 1 & 1 & 0.1 & 31711 & 29\\ 
		1 & 1 & 1 & 1 & 1 & 31856 & 30\\ 
		1 & 1 & 1 & 1 & 10 & 30930 & 28\\ 
		1 & 1 & 1 & 10 & 1e-5 & 27389 & 24\\ 
		1 & 1 & 1 & 10 & 1e-3 & 33713 & 31\\ 
		1 & 1 & 1 & 10 & 0.1 & 31966 & 28\\ 
		1 & 1 & 1 & 10 & 1 & 32151 & 29\\ 
		1 & 1 & 1 & 10 & 10 & 33340 & 29\\ 
		\bottomrule
	\end{tabular}
        \end{minipage}
\end{table}
\begin{table}[!ht]
    \centering
    \caption{Sensitivity analysis of \texttt{FP} and \texttt{DP2} when using gradient descent with momentum.}
    \label{tab:momgrid}
    \begin{tabular}{cccccc}
        \toprule
        $\eta$ & $\gamma$ & $p$ & Mom & Iters  & Fails \\
        \midrule
         1   & 1   & 1 & 0.1 & 30980 & 28 \\
         1   & 1   & 1 & 0.5 & 27872 & 26 \\
         1   & 1   & 1 & 0.9 & 32208 & 29 \\
         \textbf{0.6} & \textbf{0.1} & \textbf{2} & \textbf{0.1} &\textbf{25020} & \textbf{21}\\
         0.6 & 0.1 & 2 & 0.5 & 29371 & 27 \\
         0.6 & 0.1 & 2 & 0.9 & 40157 & 39 \\
        \bottomrule
    \end{tabular}
\end{table}

\end{document}